\theoremstyle{plain}
\newtheorem{theorem}{Theorem}[section]
\newtheorem{lemma}[theorem]{Lemma}
\newenvironment{proof}[1][Proof]{\noindent\textbf{#1.} }{\ \rule{0.5em}{0.5em}}
\begin{document}
 \title{ On SDE associated with  continuous-state branching processes  conditioned to never be extinct}
\author{ M.C. Fittipaldi\thanks{DIM--CMM,  UMI 2807 UChile-CNRS, Universidad de Chile, Casilla 170-3, Correo 3, Santiago,
Chile ;  Supported by  Basal-CONICTY; mfittipaldi@dim.uchile.cl}  
\and J. Fontbona \thanks{DIM--CMM,  UMI 2807 UChile-CNRS, Universidad de Chile, Casilla 170-3, Correo 3, Santiago,
Chile ;  Partially supported by  Basal-CONICTY; fontbona@dim.uchile.cl. } }

\maketitle

\begin{abstract}

 We study the  pathwise description of  a (sub-)critical continuous-state branching process (CSBP)  conditioned to be never extinct, 
as  the solution to a stochastic differential equation driven by  Brownian motion  and Poisson point measures. 
The interest of our approach,  which relies on applying Girsanov theorem  on the SDE that describes  the unconditioned CSBP, 
is that it  points out an explicit mechanism  to build  the immigration term appearing in the conditioned process, by  randomly selecting   jumps  of the original one.  
These techniques should  also  be useful to represent more general $h$-transforms of diffusion-jump processes. 
\end{abstract}

\begin{center}
\textbf{Key words} Stochastic Differential Equations; Continuous-state branching processes;\\
                    Non-extinction; Immigration. 

\textbf{AMS 2010 Subject Classification} 60J80, 60H20,60H10.
\end{center}

 \section{Introduction and preliminaries}

  Stochastic differential equations (SDE)  representing continuous-state branching processes (CSBP) or CSBP with immigration (CBI) 
have attracted  increasing attention  in the last years, as  powerful tools for studying  pathwise and distributional properties of 
these processes as well as some scaling limits, see e.g.  Dawson and Li \cite{DL0},  \cite{DL} , Lambert \cite{L2}, Fu and Li \cite{FL} and Caballero {\it et al.} \cite{CLU}.
 
 In this note, we are interested in SDE representations  for (sub)-critical CSBP conditioned to never be extinct.
It is well known  that such conditioned CSBP   correspond to CBIs with particular immigration mechanisms (see \cite{RR}).  Thus, it is  possible to obtain  SDE representations  for them by using  general  results and techniques developed in  some of the aforementioned works,  see \cite{DL0} and \cite{FL}.  
However, our goal  is to directly obtain  such representation by rather 
using the  fact that  the law of the conditioned CSBP is obtained from the one  of the non conditioned process,  by means of an explicit $h-$transform.
 In accordance with that relation between the laws and to the ``spine'' or immortal particle picture of the conditioned process (\cite{RR},  \cite{EV}), one should expect to 
identify, after  measure change, copies of the original driving random processes and an independent subordinator  accounting for immigration.
   Our proof  will show how to obtain these processes by using Girsanov theorem and an enlargement of the probability space in order to 
select  by a suitable marking procedure those  jumps of the original (non conditioned) process that will constitute (or will not)  the immigrants.
  The enlargement of the probability space and the marking  procedure  are both inspired in a  construction of Lambert \cite{L2} on 
stable L\'evy processes. They are also reminiscent of the sized biased tree representation of measure changes for Galton-Watson trees  
(Lyons {\it et. al} \cite{LPR}) or for  branching Brownian motions (see e.g. Kyprianou  \cite{K} and Engl\"anger and Kyprianou  \cite{EK}),
 but we do not aim at fully developing those  ideas in the present  framework. In a related  direction,    in a recently posted article \cite{H} 
H\'enard  obtains  the same SDE description of the conditioned CSBP, using the look-down particle representation of CSBP  of Donnelly and Kurtz \cite{DK}.

We start by recalling  some definitions and classic results about CSBPs and L\'evy processes along the lines of 
\cite[Chap. 1,2 and 10]{Ky}, in particular the relationship between them through the Lamperti transform. 
(We also refer the reader to  Le Gall \cite{LG}  and Li \cite{Li} for further background on CSBP). 

\subsection{Continuous-state branching processes}

 Continuous-state branching processes (CSBP) were introduced by Jirina \cite{J} in 1958. Later, Lamperti \cite{L1} 
showed that they can be obtained as scaling limits of a sequence of Galton-Watson processes. 
A CSBP with probability laws given the initial state 
$\{ \mathbb{P}_{x} : x\geq0\}$ is a c\`{a}dl\`{a}g $[0,\infty)$-valued strong Markov processes $Z= \{Z_{t}: t\geq 0\}$ satisfying the
branching property. That is,  for any $t\geq0$ and $z_{1},z_{2} \in [0,\infty)$, $Z_{t}$  under $\mathbb{P}_{z_{1}+z_{2}}$ has the same   law as the 
independent sum $Z_{t}^{(1)} + Z_{t}^{(2)}$, where the distribution of $Z_{t}^{(i)}$ is equal to that of $Z_{t}$ 
under $\mathbb{P}_{z_{i}}$ for $i=1,2$. Usually, $Z_{t}$ represents the population at time $t$ descending from an initial population $x$.   The law of $Z$ is completely characterized by its Laplace transform
\[
\mathbb{E}_{x}(e^{- \theta Z_{t}})\,=\,e^{-xu_{t}(\theta)}, \,\,\forall\, x>0,\, t\geq0, 
\]
where $u$ is a differentiable function in $t$  satisfying    
     \begin{equation}\label{eqdif}
     \left \{ 
     \begin{array}{ll}
      \displaystyle{\frac{\partial u_{t}}{ \partial t}(\theta)+ \psi(u_{t}(\theta))}=0\\                                                                        
      \\ u_{0}(\theta)=\theta,
     \end{array}
     \right.
     \end{equation}   
     and  $\psi$ is called the \textit{branching mechanism} of $Z$, which has the form 
     \begin{equation}\label{psi}
     \psi(\lambda)=-q-a\lambda+\frac{1}{2}\sigma^{2}\lambda^{2}
                   +\int_{(0,\infty)}(e^{-\lambda x} -1
                   +\lambda x \textbf{1}_{(x<1)})\Pi(dx)  \quad \lambda\geq 0, 
     \end{equation}
for some $q\geq0,\,a\in \mathbb{R},\, \sigma\geq0$ and $\Pi$ a measure supported in $(0,\infty)$ such that\\
$\int_{(0,\infty)} (1\wedge x^{2})\Pi(dx)\,<\,\infty.$
In particular, $\psi$ is  the characteristic exponent of a \textit{spectrally positive} L\'evy process, i.e. one 
with no negative jumps.    Since clearly,  $  \mathbb{E}_{x}(Z_{t})=xe^{-\psi'(0+)t}$, 
 defining $\rho:=\psi'(0+)$ one has the following classification of CSBPs :
     \begin{itemize}
       \item[(i)]
       \textnormal{subcritical}, if $\rho>0$,
       \item[(ii)]
       \textnormal{critical}, if $\rho=0$ and
       \item[(iii)]
       \textnormal{supercritical}, if $\rho<0$,
     \end{itemize}
according to whether the process will, on average, decrease, remain constant or increase. 

In the following, we will assume that $Z$  is \textit{conservative},  i.e.  $\forall$ $t>0$, $\mathbb{P}_{x}(Z_{t}<\infty)= 1$. 
By Grey (1974), this is true if and only if  $ \int_{0^{+}} \frac{d\xi}{|\psi(\xi)|}=\infty$, so it is sufficient to asume 
$\psi(0)=0$ and $|\psi'(0+)|<\infty$.

\subsection{L\'evy Processes and their connection with CSBP} 

\hspace*{0.5cm} Let $X=\{X_{t}: t \geq 0\}$ be a spectrally positive  L\'evy process with characteristic exponent $\psi$ given by 
\eqref{psi} with $q=0$, and initial state  $x\geq0$ . 
By the L\'evy-Ito  decomposition it is well known that it can be written as   the following sum of  independent processes 
 \begin{equation*}
       X_{t}=       x+at+\sigma B_{t}^{X} + \int_{0}^{t}\int_{1}^{\infty} rN^{X}(ds,dr)+ \int_{0}^{t}\int_0^1r\tilde{N}^{X}(ds,dr), 
    \end{equation*}
where $a$ is a real number, $\sigma \geq 0$, $B^{X}$ is a Brownian motion,
$N^{X}$ is an independent Poisson measure on $[0,\infty)\times(0,\infty)$ with intensity measure $dt\times\Pi(dr)$ and  $\tilde{N}^{X}(dt,dr):=N^{X}(dt,dr)-dt\Pi(dr)$ denotes the compensated measure associated to $N^{X}$ (the last integral  thus being a square integrable martingale of  compensated jumps of magnitude less than unity).

  Lamperti \cite{L} established a one-to-one correspondence between CSBPs and spectrally positive L\'evy processes   via a random time 
change. More precisely, for a L\'evy process $X$ as above the  process  
\[
Z:=\{Z_{t}=X_{\theta_{t}\wedge T_{0}}: t\geq 0 \}, 
\]
where $T_{0} = \inf \{t>0:X_{t}=0\}$ and $\theta_{t}=\inf \left\{s>0:\int_{0}^{s}\frac{du}{X_{u}}>t \right\}$,
is a continuous-state branching process with branching mechanism $\psi$ and initial value $Z_{0}=x$.
  Conversely, given $Z=\{Z_{t}:t\geq0\}$ a CSBP with branching mechanism $\psi$, such that
$Z_{0}=x>0$, we have that
\[
X:=\{X_{t}=Z_{\varphi_{t}\wedge T}: t\geq0\}, 
\]
where $T=\inf\{t>0: Z_{t}=0\}$ and $\varphi_{t}=\inf \left\{s>0:\int_{0}^{s}Z_{u}du>t \right\}$, is a L\'evy process 
with no negative jumps,  stopped at $T_{0}$ and satisfying $\psi(\lambda)=\log \mathrm{I\!E}(e^{-\lambda X_{1}})$, 
with initial position $X_{0}=x$.

Relying on this relationship, Caballero \textit{et al.} \cite[Prop 4]{CLU} provide a pathwise description of the dynamics of a CSBP:
for $(Z_{t}, t\geq 0)$ there exist a standard Brownian motion $B^{Z}$, and an independent Poisson measure $N^{Z}$ on 
$[0,\infty)\times(0,\infty)\times(0,\infty)$ with intensity measure $dt\times d\nu \times \Pi(dr)$ in an enlarged probability space
such that
     \begin{equation}\label{SDECSBP}
      \begin{split}
      Z_{t}=& x + a\int_{0}^{t}Z_{s}ds+\sigma\int_{0}^{t}\sqrt{Z_{s}}dB_{s}^{Z} + \int_{0}^{t}\int_{0}^{Z_{s^{-}}}\int_{1}^{\infty}rN^{Z}(ds, d\nu, dr)\\ 
            &+\int_{0}^{t}\int_{0}^{Z_{s^{-}}}\int_{0}^{1}r\tilde{N}^{Z}(ds, d\nu, dr),
      \end{split}      
     \end{equation}
where $\tilde{N}^{Z}$ is the compensated Poisson measure associated with $N^{Z}$.
   Pathwise properties of stochastic differential equations driven by Brownian motion and Poisson point processes have been studied  
in more  general settings in \cite{DL0}, \cite{FL}  and  \cite{DL}.
  In particular,  strong existence and pathwise uniqueness for \eqref{SDECSBP} is established \cite{FL}.
 Related SDE have also been considered in Bertoin and Le Gall  \cite{BLGII}, \cite{BLGIII}.

\section{CSBPs conditioned to be never extinct as  solutions of SDEs}

\subsection{CSBP conditioned to be never extinct}
  
We assume from now on that $Z$ is a (sub-)critical CSBP such that $\psi(\infty)=\infty$ and 
$\int^{\infty}\frac{d\xi}{\psi(\xi)}<\infty$. Under these and the previous conditions, the process does not explode and there is 
almost surely extinction in finite time. Branching processes conditioned to stay positive were first studied in the continuous-state
framework by Roelly and Rouault \cite{RR}, who proved 
 that for  $Z$  as before, 
       \begin{equation}\label{probcond}
       \mathbb{P}_{x}^{\uparrow}(A):=\lim_{s\uparrow\infty}\mathbb{P}_{x}(A|T>t+s), \quad A\in\sigma(Z_{s}:s\leq t)
      \end{equation}
is a well defined probability measure which satisfies
\[
\mathbb{P}_{x}^{\uparrow}(A)=\mathbb{E}(\textbf{1}_{A}e^{\rho t}\frac{Z_{t}}{x}).
\]  
In particular, $\mathbb{P}_{x}^{\uparrow}(T<\infty)=0$, and  $\{e^{\rho t}Z_{t}: t\geq0\}$ is a martingale under $\mathbb{P}_{x}$. 
Note that $\mathbb{P}_{x}^{\uparrow}$ is the law of the so-called $Q$-\textit{process} (for in-depth looks at this type of processes, 
we refer the reader to \cite{L2}, \cite{S}  and references therein). 
  They also proved that $(Z,\mathbb{P}^{\uparrow})$ has the same law as a CBI with branching mechanism $\psi$ and 
immigration mechanism $\phi(\theta)=\psi'(\theta)-\rho,$ $ \theta\geq0.$ 
This means that $(Z,\mathbb{P}^{\uparrow})$ is a \textit{c\`adl\`ag} $[0,\infty)$-valued process,
and for all  $x,t>0$ and $\theta\geq0$
\[
\mathbb{E}_{x}^{\uparrow}(e^{-\theta Z_{t}})=\exp \{ -xu_{t}(\theta)-\int_{0}^{t}\phi(u_{t-s}(\theta))ds\}, 
\]
where $u_{t}(\theta)$ is the unique solution to \eqref{eqdif}. Note also that $\phi$ is the Laplace exponent of a subordinator. 
        
\subsection{Main Result}

\hspace*{0.5cm} The above result is the key for the study of CSBP conditioned on non-extinction, but we seek a more explicit
description for the paths of $Z$ under $\mathbb{P}^{\uparrow}$.    
To this end, we shall prove that $(Z,\mathbb{P}^{\uparrow})$ has a SDE representation, which agrees with the interpretation
of a CSBP conditioned on non-extinction as a CBI, but also gives us a pathwise description for the conditioned process.
In particular, this result extends Lambert's results for the stable case \cite[Theorem 5.2]{L2} (see below for details) as well 
as equation \eqref{SDECSBP}.

\begin{theorem}\label{teo:principal}
  Under $\mathbb{P}^{\uparrow}$, the process $Z$ is the unique strong solution  of the following stochastic differential equation:
  \begin{equation}\label{SDECSBPCond}     
  \begin{array}{lcl}      
     Z_{t}&=& x+a\int_{0}^{t}Z_{s}ds
              +\sigma\int_{0}^{t}\sqrt{Z_{s}}dB^{\uparrow}_{s} + \int_{0}^{t}\int_{0}^{Z_{s^{-}}}\int_{1}^{\infty}rN^{\uparrow}(ds, d\nu, dr)+\int_{0}^{t}\int_{0}^{Z_{s^{-}}}\int_{0}^{1}r\tilde{N}^{\uparrow}(ds, d\nu, dr)\\
           \\
           &&+\int_{0}^{t}\int_{0}^{\infty}rN^{\star}(ds, dr) +\sigma^{2}t  
   \end{array}
  \end{equation}
where $\{B^{\uparrow}_{t}:t\geq0\}$ is a Brownian motion, $N^{\uparrow}$ and $N^{\star}$ are Poisson measures on 
$[0,\infty)\times (0,\infty)^2$ and $[0,\infty)\times (0,\infty)$  with intensities measures $ds \times d\nu \times \Pi(dr)$
 and $ds \times r\Pi(dr)$, respectively, and these objects are mutually independent 
(as usual, $\tilde{N}^{\uparrow}$ stands for the compensated measure associated 
with $N^{\uparrow}$).
Moreover, the point  processes $N^{\uparrow}$ and $N^{\star}$ can be constructed by change of measure and a marking procedure on an enlargement of the 
probability space where $B^Z$ and $N^Z$ in \eqref{SDECSBP} are defined, which supports and independent i.i.d. sequence of uniform 
random variables in the unit interval.
 \end{theorem}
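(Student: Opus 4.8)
The plan is to obtain \eqref{SDECSBPCond} by applying Girsanov's theorem to the SDE \eqref{SDECSBP} satisfied by $Z$ under $\mathbb{P}$, using that, by \eqref{probcond}, the restriction of $\mathbb{P}^{\uparrow}_x$ to $\sigma(Z_s:s\le t)$ has density $D_t=e^{\rho t}Z_t/x$ with respect to $\mathbb{P}_x$. First I would identify $D$ as a Dol\'eans--Dade exponential. Applying It\^o's formula to $e^{\rho t}Z_t$ along \eqref{SDECSBP} and using $\rho=\psi'(0+)=-a-\int_{[1,\infty)}r\,\Pi(dr)$, the finite-variation terms cancel (this is exactly the martingale property of $e^{\rho t}Z_t$ recalled above), leaving
\[
dD_t=D_{t^-}\Big(\tfrac{\sigma}{\sqrt{Z_{t^-}}}\,dB^Z_t+\int_0^{Z_{t^-}}\!\!\int_0^\infty \tfrac{r}{Z_{t^-}}\,\tilde N^Z(dt,d\nu,dr)\Big),
\]
so that $D=\mathcal{E}(L)$ with $L$ the local martingale having Brownian kernel $\sigma/\sqrt{Z_{t^-}}$ and jump kernel $H(t,\nu,r)=r/Z_{t^-}$ (note $1+H>0$).

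Then I would apply the Girsanov theorem for jump--diffusions under the measure $\mathbb{P}^{\uparrow}$ determined by $D$. This yields, on one hand, that $B^{\uparrow}_t:=B^Z_t-\int_0^t \sigma/\sqrt{Z_s}\,ds$ is a $\mathbb{P}^{\uparrow}$-Brownian motion; substituting $dB^Z=dB^{\uparrow}+(\sigma/\sqrt{Z})\,ds$ into \eqref{SDECSBP} turns the Gaussian martingale into $\sigma\int_0^t\sqrt{Z_s}\,dB^{\uparrow}_s$ plus the deterministic immigration drift $\sigma^2 t$. On the other hand, under $\mathbb{P}^{\uparrow}$ the compensator of the jump measure $N^Z$ becomes $(1+H)\,ds\,d\nu\,\Pi(dr)=(1+r/Z_{s^-})\,ds\,d\nu\,\Pi(dr)$ on $\{\nu<Z_{s^-}\}$, i.e. the conditioned process has strictly more jumps than the unconditioned one.

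The heart of the argument is to split these jumps by an independent marking on the enlargement of the space carrying the i.i.d. uniforms. To each atom $(s,\nu,r)$ of $N^Z$ I would attach an independent uniform mark and declare the jump of branching type when the mark lies below $Z_{s^-}/(Z_{s^-}+r)$ and of immigration type otherwise, writing $N^Z=N^{\uparrow}+\hat N^{\star}$ accordingly. A compensator computation then shows that $N^{\uparrow}$ has $\mathbb{P}^{\uparrow}$-compensator $ds\,d\nu\,\Pi(dr)$, while the immigration part $\hat N^{\star}$ has compensator $(r/Z_{s^-})\,ds\,d\nu\,\Pi(dr)$ on $\{\nu<Z_{s^-}\}$; crucially, integrating the latter over $\nu\in(0,Z_{s^-})$ removes all dependence on $Z$ and leaves the \emph{deterministic} compensator $r\,\Pi(dr)\,ds$ for the projected measure $N^{\star}(ds,dr)$. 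Reorganizing the jump integrals --- keeping the large $N^{\uparrow}$-jumps uncompensated, recompensating the small $N^{\uparrow}$-jumps with their new compensator $ds\,d\nu\,\Pi(dr)$ (which absorbs exactly the finite-variation term inherited from the original compensated small-jump integral), and recording the immigration atoms as the uncompensated term $\int_0^t\!\int_0^\infty r\,N^{\star}(ds,dr)$ --- reproduces \eqref{SDECSBPCond}, with the drift coefficient $a$ left unchanged.

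I expect the main obstacle to be the final identification of the laws: showing that $B^{\uparrow}$, $N^{\uparrow}$ and $N^{\star}$ are a Brownian motion and two Poisson measures with the stated intensities that are moreover \emph{mutually independent}. The natural route is a joint characterization via predictable compensators: the deterministic compensator of $N^{\star}$ forces it to be Poisson by Watanabe's theorem, the pair $(N^{\uparrow},\hat N^{\star})$ has no common atoms by construction, and the continuous martingale $B^{\uparrow}$ is orthogonal to the purely discontinuous ones; packaging these into a single exponential/Laplace-functional identity yields the asserted mutual independence. Two technical points must be handled with care here: making the marking rigorous in the infinite-activity regime (attaching the uniform marks to the atoms of $N^Z$ via an auxiliary Poisson structure rather than to a mere enumeration), and the deterministic-compensator computation, which is where the special form $\phi(\theta)=\psi'(\theta)-\rho$ of the immigration mechanism is effectively produced. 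Strong existence and pathwise uniqueness for the resulting equation \eqref{SDECSBPCond} then follow from the general results of \cite{FL}, since it has the same structure as \eqref{SDECSBP} augmented by an independent driving subordinator.
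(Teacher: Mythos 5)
Your proposal is correct and follows essentially the same route as the paper: Girsanov applied to the density $D_t=e^{\rho t}Z_t/x$, a uniform marking of the atoms of $N^Z$ with acceptance threshold $D_{s^-}/D_s=Z_{s^-}/(Z_{s^-}+r\mathbf{1}_{(\nu\le Z_{s^-})})$ splitting $N^Z$ into a Poisson measure $N^{\uparrow}$ and an immigration measure whose compensator becomes the deterministic $r\,\Pi(dr)\,ds$ after integrating out $\nu$, independence via a joint Laplace-functional identity, and pathwise uniqueness from Fu--Li. The only point to watch is that the marking threshold must be $1$ (not $Z_{s^-}/(Z_{s^-}+r)$) for atoms with $\nu\ge Z_{s^-}$, so that inactive atoms are never reassigned to the immigration measure; this is exactly how the paper sets up its $(\Delta_n,\delta_n)$.
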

 This result implies that we can recover $Z$ conditioned on non-extinction as the solution of a SDE driven by a copy of 
$B^{Z}$, a copy of  $N^{Z}$, and a Poisson random measure with intensity $ds \times r\Pi(dr)$, plus a drift.
 (Notice that  taking out the last line, corresponding to a subordinator with drift, one again  obtains equation \eqref{SDECSBP}.) 

\section{Relations to previous results}

\subsection{Stable processes}
  
\hspace*{0.5cm} We will show that,  as  pointed out before,    Lambert's SDE representation of stable branching processes  given in 
 \cite[Theorem 5.2]{L2} can be seen as a special case of Theorem \ref{teo:principal}. 

  Let $X$ be a spectrally positive $\alpha$-stable process with characteristic exponent $\psi$ and characteristic 
measure $\Pi(dr)=kr^{-(\alpha + 1)}dr$, where $k$ is some positive constant and $1<\alpha\leq 2$. 
Let $Z$ be the branching process with branching mechanism $\psi$. 
Thanks to Theorem  \ref{teo:principal} we know that, under $\mathbb{P}^{\uparrow}$, $Z$ satisfies the following stochastic differential 
equation: 
\begin{equation}\label{eq:stableSDE}
  Z_{t}=\int_{0}^{t}\int_{0}^{Z_{s^{-}}}\int_{1}^{\infty}rN^{\uparrow}(ds,d\nu,dr)
        +\int_{0}^{t}\int_{0}^{Z_{s^{-}}}\int_{0}^{1}r\tilde{N}^{\uparrow}(ds,d\nu,dr)
       + \int_{0}^{t}\int_{0}^{\infty}rN^{\star}(ds,dr),
  \end{equation}
where $N^{\uparrow}$ is a Poisson random measure with intensity $ds \times d\nu \times \Pi(dr)$ and $N^{\star}$ is an independent
Poisson random measure with intensity $ds \times r\Pi(dr)$.
  Now, we define 
\[
\theta_{n}= \displaystyle{\frac{r^{\uparrow}_{n}\textbf{1}_{(\nu^{\uparrow}_{n}\leq Z_{t_{n}-})}}{Z_{t_{n}-}^{1/\alpha}}}, 
\]
where $\{(t_{n},r_{n}^{\uparrow}, \nu_{n}^{\uparrow}): n\in \mathbb{N}\}$ are the atoms of $N^{\uparrow}$. 
We claim that, under $\mathbb{P}^{\uparrow}$, $\{(t_{n}, \theta_{n}): n \in \mathbb{N}\}$ are atoms of a Poisson random measure 
$N'$ with intensity $ds \times \Pi(du)$. 
Indeed, for any bounded non-negative predictable process $H$, and any positive bounded function $f$ vanishing at zero,
\[
M_{t}: = \sum_{t_n\leq t} H_{t_n}f(\theta_n) - \int_{0}^{t}H_{s}ds\int_{0}^{\infty}\int_{0}^{\infty} f\left(\frac{r}{Z_{s}^{1/\alpha}}\right)\textbf{1}_{(\nu\leq Z_{s })}d\nu\Pi(dr)
\]
is a martingale. If we change variables, the particular form of $\Pi$ implies that 
\[
M_{t}= \sum_{t_n \leq t} H_{s}f(\theta_n ) - 
         \int_{0}^{t}H_{s}ds\int_{0}^{\infty} f(u) \Pi(du).
\]
  Taking expectations, our claim follows thanks to Lemma \ref{characterization} below.
  Since
$\sum_{t_n \leq t} r^{\uparrow}_{n}\textbf{1}_{(\nu^{\uparrow}_{n}\leq Z_{t_n -})}  
=\sum_{t_n \leq t} Z^{1/\alpha}_{t_n -} \theta_{n},$
we can rewrite \eqref{eq:stableSDE} as
\begin{equation*}
  Z_{t}= \int_{0}^{t}\int_{1}^{\infty}Z^{1/\alpha}_{s-} uN'(ds,du)+\int_{0}^{t}\int_{0}^{1} Z^{1/\alpha}_{s-}u\tilde{N}'(ds,du) + \int_{0}^{t}\int_{0}^{\infty}rN^{\star}(ds,dr).
\end{equation*}
  Defining
\[
X_{t}:=\int_{0}^{t}\int_{1}^{\infty} uN'(ds,du)+\int_{0}^{t}\int_{0}^{1} u\tilde{N}'(ds,du),
\]
by the L\'evy-Ito decomposition it is easy to see that  $X$ is an $\alpha$-stable L\'evy process  with characteristic exponent 
$\psi$. Similarly,  
\[
S_{t}:=\int_{0}^{t}\int_{0}^{\infty}rN^{\star}(ds,dr)
\]
is seen to be an $(\alpha -1)$-stable subordinator. 
Independence of $X$ and $S$ is granted by construction, because the two processes do not have simultaneous jumps. 
Thus, we have $$ dZ_{t}=Z^{1/\alpha}_{t}dX_{t} + dS_{t} ,$$ which corresponds to Lambert's result.
\subsection{CSBP flows as SDE solutions}

 A family of CSBP processes  $Z=\{Z_{t}(a): t\geq 0, a\geq 0\}$  allowing  the initial population size $Z_{0}(a)=a$ to vary,  can be 
constructed simultaneously as a two parameter process  or {\it stochastic flow} satisfying  the branching property.
  This was done by Bertoin and Le-Gall  \cite{BLG0} by using families of  subordinators. In \cite{BLGII}, \cite{BLGIII} they later used 
   Poisson measure driven SDE to  formulate such type of flows in related contexts, including equations  close   to \eqref{SDECSBP}.  
  In the same line, Dawson and Li \cite{DL} proved the existence of strong solutions for stochastic flows of continuous-state branching
processes with immigration, as SDE families driven by white noise processes and Poisson random measures with joint  regularity properties.
   The stochastic equations they study (in particular  equation (1.5) ) are  close to equation \eqref{SDECSBPCond},
 the main difference being  the immigration behavior which in their case only covers linear drifts.
  For simplicity reasons Theorem \ref{teo:principal} is presented  in the case of a Brownian motion and Poisson measure
  driven SDE, but  our  arguments   can be extended to the white-noise and  Poisson measure driven stochastic flow 
considered in \cite{DL} (in absence of immigration).

\section{Proof of the main theorem}
\hspace*{0.5cm}  In \cite{L2}, a suitable marking of Poisson point processes was used to 
firstly construct a stable L\'evy process, conditioned to stay positive, out of the realization of the unconditioned one.
  After time-changing the author takes advantage of the scaling property of $\alpha$-stable processes to derive an SDE for the 
branching process. Our proof is  inspired in his marking argument  but in turn it is carried out directly in the time scale of the CSBP.
We will need the following version of Girsanov's theorem (c.f. Theorem 37 in  Chapter III.8 of  \cite{P}): 
 
   \begin{theorem}
    Let $(\varOmega,\mathcal{F},(\mathcal{F}_{t}),\mathbb{P})$ be a filtered probability space, and  let $M$ be a $\mathbb{P}$-local
martingale with $M_{0}=0$. 
Let $\mathbb{P}^{\star}$ be another probability measure absolutely continuous with respect to $\mathbb{P}$, and let 
$D_{t}=\mathbb{E}(\frac{d\mathbb{P}^{\star}}{d\mathbb{P}}|\mathcal{F}_{t})$. 
Assume that $\langle M,D \rangle$ exists for  $\mathbb{P}$. Then $A_{t}=\int_{0}^{t}\frac{1}{Z_{s^{-}}}d\langle M,Z\rangle_{s}$ 
exists a.s. for the probability $\mathbb{P}^{\star}$, and $M_{t}-A_{t}$ is a $\mathbb{P}^{\star}$-local martingale.
   \end{theorem}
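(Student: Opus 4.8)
The plan is to recognize this statement as the classical Girsanov--Meyer decomposition and to reduce it to the abstract change-of-measure principle linking $\mathbb{P}$- and $\mathbb{P}^{\star}$-local martingales through the density martingale. Writing $D_t=\mathbb{E}(\frac{d\mathbb{P}^{\star}}{d\mathbb{P}}\mid\mathcal{F}_t)$, which is a nonnegative c\`adl\`ag $\mathbb{P}$-martingale, the first tool I would invoke is the Bayes-type lemma: a c\`adl\`ag adapted process $N$ is a $\mathbb{P}^{\star}$-local martingale if and only if the product $N D$ is a $\mathbb{P}$-local martingale (this is the content of the lemma preceding Protter's Theorem~37). Granting this, it suffices to set $N_t:=M_t-A_t$ with $A_t=\int_0^t D_{s^-}^{-1}\,d\langle M,D\rangle_s$ and to verify that $N D$ is a $\mathbb{P}$-local martingale.

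I would carry out that verification by integration by parts. Since $A$ is continuous and of finite variation (it is predictable, being built from the predictable bracket $\langle M,D\rangle$), one has $[A,D]=0$, hence $[N,D]=[M,D]$, and the product rule gives
\[
d(N_tD_t)=N_{t^-}\,dD_t+D_{t^-}\,dM_t-d\langle M,D\rangle_t+d[M,D]_t.
\]
The first two terms are stochastic integrals of locally bounded predictable integrands against the $\mathbb{P}$-local martingales $D$ and $M$, hence $\mathbb{P}$-local martingales; the last two combine into $d\big([M,D]_t-\langle M,D\rangle_t\big)$, which is a $\mathbb{P}$-local martingale precisely because $\langle M,D\rangle$ is the predictable compensator of $[M,D]$ (this is where the hypothesis that $\langle M,D\rangle$ exists enters). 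Summing, $N D$ is a $\mathbb{P}$-local martingale, so by the Bayes lemma $N=M-A$ is a $\mathbb{P}^{\star}$-local martingale.

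The main obstacle I anticipate is not the algebra but the analytic care forced by $\mathbb{P}^{\star}$ being merely \emph{absolutely continuous}, not equivalent, to $\mathbb{P}$: the density $D$ may vanish, so $A_t=\int_0^t D_{s^-}^{-1}\,d\langle M,D\rangle_s$ need not even be defined under $\mathbb{P}$. To handle this I would show that the zero set of $D_{\cdot^-}$ is $\mathbb{P}^{\star}$-negligible: letting $\tau=\inf\{s:D_s=0\}$, the nonnegative martingale $D$ is absorbed at $0$ after $\tau$, and since $d\mathbb{P}^{\star}|_{\mathcal{F}_t}=D_t\,d\mathbb{P}|_{\mathcal{F}_t}$ one gets $\mathbb{P}^{\star}(\tau\le t)=\mathbb{E}_{\mathbb{P}}(D_t\mathbf{1}_{\tau\le t})=0$, so $\mathbb{P}^{\star}$-a.s. both $D$ and $D_{\cdot^-}$ stay strictly positive on $[0,t]$; hence $D_{s^-}^{-1}$ is finite and $A$ is well defined under $\mathbb{P}^{\star}$. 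The remaining bookkeeping is the localization needed to justify the Bayes lemma and the product-rule step for genuinely \emph{local} martingales, which I would dispatch with a reducing sequence of stopping times $T_n\uparrow\infty$ making the stopped processes true martingales and the stopped bracket integrable. Since the statement merely restates Protter's Theorem~37, in the paper itself one would simply cite that reference; the sketch above records the content of the result being invoked.
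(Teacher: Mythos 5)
The paper offers no proof of this statement at all: it is quoted verbatim (typo and all --- the $Z$'s in the displayed formula for $A_t$ should be $D$'s) from Protter, Theorem~37 of Chapter~III.8, exactly as you anticipate in your last sentence. Your sketch is the standard argument behind that theorem --- the Bayes-type lemma reducing the claim to ``$ND$ is a $\mathbb{P}$-local martingale'', integration by parts, and the observation that $[M,D]-\langle M,D\rangle$ is a local martingale --- and it is essentially sound, including the correct treatment of the zero set of $D$ under mere absolute continuity. Two small caveats: $\langle M,D\rangle$, being only the predictable compensator of $[M,D]$, need not be continuous, so $[A,D]=0$ is not justified; what saves the step is Yoeurp's lemma ($A$ predictable of finite variation and $D$ a local martingale imply $[A,D]$ is a $\mathbb{P}$-local martingale). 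Also, the product-rule computation uses $D_{s^-}\,dA_s=d\langle M,D\rangle_s$, which requires knowing that $d\langle M,D\rangle$ does not charge $\{D_{s^-}=0\}$ under $\mathbb{P}$ (true, since $D$ is absorbed at $0$, so $[M,D]$ and hence its compensator are constant thereafter); this is the point where Protter's proof takes the care your sketch defers.
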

  The following well-known characterization of Poisson point processes will also be useful:
  
\begin{lemma}\label{characterization}
 Let $(\Omega,\mathcal{F}, (\mathcal{F}_{t}),\mathbb{P})$ be a filtered probability space, $(S,\mathcal{S},\eta)$ an arbitrary 
$\sigma$-finite measure space, and $\{(t_{n},\delta_{n}) \in \mathbb{R_{+}}\times S\}$ a countable family of  random variables such 
that $\{ t_{n}\leq t, \delta_{n}\in A \}\in {\cal F}_t$ for all $n \in \mathbb{N}$, $t\geq 0$ and $A\in {\cal S}$, and moreover
\begin{equation}\label{form}
\mathbb{E}\sum\limits_{n:t_{n}\leq t}F_{t_{n}}g(\delta_{n})= \mathbb{E} \int\limits_{0}^{t}F_{s}ds\int\limits_{S}g(x)m(dx) 
\end{equation}
for any nonnegative predictable process $F_s$ and any  nonnegative function $g:S\rightarrow \mathrm{I\!R}$. 
Then, $(t_{n},\delta_{n})_{n\in \mathbb{N}}$ are the atoms of a Poisson random measure $N$ on $\mathbb{R_{+}\times S}$ 
with intensity  $dt \times m(dx)$.
\end{lemma}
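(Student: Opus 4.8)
The plan is to show that the integer-valued random measure $N:=\sum_{n}\varepsilon_{(t_{n},\delta_{n})}$ (with $\varepsilon_{z}$ the Dirac mass at $z$) has the Laplace functional of a Poisson random measure with intensity $\mu(ds,dx):=ds\,m(dx)$; since the Laplace functional evaluated on all nonnegative test functions determines the law of a random measure, this proves the claim. First I would note that taking $F\equiv1$ and $g=\mathbf 1_{A}$ with $m(A)<\infty$ in \eqref{form} gives $\mathbb{E}\,N([0,t]\times A)=t\,m(A)<\infty$, so $N$ is a.s.\ locally finite and, by $\sigma$-finiteness of $m$, a genuine point process. Identity \eqref{form} is exactly the statement that the deterministic measure $\mu$ is the predictable compensator of $N$, but only for integrands of product form $F_{s}g(x)$. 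So the first real step is to extend it to
\[
\mathbb{E}\int_{0}^{t}\!\!\int_{S}H(s,x)\,N(ds,dx)=\mathbb{E}\int_{0}^{t}\!\!\int_{S}H(s,x)\,ds\,m(dx)
\]
for every nonnegative $\mathcal P\otimes\mathcal S$-measurable integrand $H$ (here $\mathcal P$ is the predictable $\sigma$-field). This is a routine functional monotone-class argument: the products $F_{s}g(x)$ form a multiplicative family generating $\mathcal P\otimes\mathcal S$, the identity is linear in $H$ and stable under bounded monotone limits, and one passes to nonnegative $H$ by truncation. Multiplying $H$ by $\mathbf 1_{(s,t]}(u)\mathbf 1_{B}$ with $B\in\mathcal F_{s}$ then upgrades the expectation identity to the martingale property of $\int_{0}^{\cdot}\!\int_{S}H\,(N-\mu)$ for bounded predictable $H$ supported in a finite-$\mu$ region.

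The core step is to build a Dol\'eans-type exponential martingale. Fix a bounded nonnegative (deterministic, hence predictable in $s$) function $f$ on $\mathbb{R}_{+}\times S$ vanishing outside $[0,T]\times S_{0}$ with $m(S_{0})<\infty$, set $Y_{t}:=\int_{0}^{t}\!\int_{S}f\,dN=\sum_{n:t_{n}\leq t}f(t_{n},\delta_{n})$, and consider
\[
V_{t}:=\exp\!\Big(-Y_{t}+\int_{0}^{t}\!\!\int_{S}\big(1-e^{-f(s,x)}\big)\,ds\,m(dx)\Big).
\]
Writing $V_{t}=e^{-Y_{t}}C_{t}$ with $C_{t}$ the deterministic, continuous, finite-variation exponential factor, and using $e^{-Y_{t}}=1+\int_{0}^{t}\!\int_{S}e^{-Y_{s^{-}}}(e^{-f(s,x)}-1)\,N(ds,dx)$ together with the product rule (no covariation term, since $C$ is continuous of finite variation), I would obtain
\[
V_{t}=1+\int_{0}^{t}\!\!\int_{S}V_{s^{-}}\big(e^{-f(s,x)}-1\big)\,\tilde N(ds,dx),
\]
where $\tilde N:=N-\mu$. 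The integrand $V_{s^{-}}(e^{-f}-1)$ is predictable, bounded (by $C_{T}$), and supported in $[0,T]\times S_{0}$, so by the extended compensation identity of the previous paragraph the stochastic integral is a true martingale, whence $\mathbb{E}[V_{t}]=V_{0}=1$.

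Finally, since $C_{t}$ is deterministic it factors out of the expectation, and $\mathbb{E}[V_{t}]=1$ rearranges to
\[
\mathbb{E}\Big[\exp\!\Big(-\int_{0}^{t}\!\!\int_{S}f\,dN\Big)\Big]=\exp\!\Big(-\int_{0}^{t}\!\!\int_{S}\big(1-e^{-f(s,x)}\big)\,ds\,m(dx)\Big),
\]
which is exactly the Laplace functional of a Poisson random measure with intensity $\mu$. Letting $t\uparrow\infty$ (or taking $t=T$) and ranging over all such $f$ identifies the law of $N$, proving that $(t_{n},\delta_{n})_{n}$ are the atoms of a Poisson random measure with intensity $ds\,m(dx)$.

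I expect the main obstacle to be the justification that $V$ is a genuine (not merely local) martingale, i.e.\ converting the expectation form \eqref{form} into the compensation/martingale statement for the predictable integrand $V_{s^-}(e^{-f}-1)$ via the monotone-class extension; once that integrability and compensation bookkeeping is in place, the product-rule computation and the passage to the Laplace functional are routine.
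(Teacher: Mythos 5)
Your proof is correct and is essentially the paper's argument in martingale clothing: the paper expands $e^{\sum_{t_n\le t}f(\delta_n)}$ telescopically over the jumps (which is exactly your pathwise identity for $e^{-Y_t}$), applies \eqref{form} with the predictable weight $F_s=\prod_{t_k<s}e^{f(\delta_k)}$ and $g=e^{f}-1$, and solves the resulting scalar integral equation for $\mathbb{E}\bigl[e^{\sum_{t_n\le t}f(\delta_n)}\bigr]$ — which is the same computation as normalizing by your deterministic factor $C_t$ to get a Dol\'eans exponential of constant expectation — before concluding via Campbell's formula. The only substantive difference is your monotone-class extension of \eqref{form} to non-product predictable integrands, which you need because you allow $f$ to depend on time; the paper takes $f=f(x)$ so that \eqref{form} applies directly in product form, and your variant is a harmless (indeed slightly more careful) strengthening that pins down the law of $N$ on all of $\mathbb{R}_{+}\times S$ in one stroke.
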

\begin{proof} Writing
\[
\displaystyle{e^{\left\{\sum\limits_{t_{n}\leq t}f(\delta_{n}))\right\}}}= 
\sum\limits_{n:t_{n}\leq t}\left[\prod\limits_{k:t_{k}<t_{n}}e^{f(\delta_{k})}\right](e^{f(\delta_{n})}-1)=
\sum\limits_{n:t_{n}\leq t}\left[e^{\sum\limits_{k:t_{k}\leq s}f(\delta_{k})}\right]  (e^{f(\delta_{n})}-1)
\]
we get from  \eqref{form} that
\[
\mathbb{E}\left[e^{\sum\limits_{n:t_{n}\leq t}f(\delta_{n})}\right]=
\int\limits_{0}^{t}\mathbb{E}\left[e^{\sum\limits_{k:t_{k}\leq s}f(\delta_{k})}\right]ds\int_{S}(e^{f(x)}-1)m(dx)
\]
since $F_{s}:=\prod\limits_{ t_{k}<s}e^{f(\delta_{k})}$ is a predictable process. 
  Solving this differential equation yields
\[
\mathbb{E}\left[e^{\sum\limits_{t_{n}\leq t}f(\delta_{n})}\right]=e^{-t\int\limits_{S}(1- e^{f(x)})m(dx)}, 
\]
and the statement follows by  Campbell's formula (see e.g. \cite{Ki})   \end{proof}
\\

 \begin{proof}[Proof of Theorem 3.1]
  We will prove that under the laws $ \mathbb{P}_{x}^{\uparrow}$ the process $Z$  in equation  \eqref{SDECSBP} is a weak solution of \eqref{SDECSBPCond}. Pathwise  
 uniqueness, which then classically implies also  strong existence, can   be  shown as in  \cite{FL}.
  
    We write $B=B^Z$ and $N=N^Z$, and we denote by $\{\mathcal{F}_{t}\}$ the filtration
\[
\mathcal{F}_{t}:=\sigma(B_{s},(r_{n},\nu_{n})\textbf{1}_{t_{n}\leq s};n\in \mathbb{N}, s\leq t), 
\]
where $\{(t_{n},r_{n},\nu_{n})\in [0,\infty)\times(0,\infty)\times(0,\infty)\}_{n\in\mathbb{N}}$ are the atoms of the Poisson point 
process $N$.  We will use the absolute continuity of $\mathbb{P}^{\uparrow}$ w.r.t. $\mathbb{P}$ and  the Radon-Nikodym density 
$D_{t}= \frac{e^{\rho t}Z_{t}}{x}$ applying the previous theorem to the process $\{B_{t}: t\geq 0\}$ and, indirectly,
to the Poisson random measure $N$ and its compensated measure.

 Dealing with the diffusion part is standard since
$d\langle D,B \rangle_{t}=\frac{e^{\rho t}}{x}\sigma\sqrt{Z_{t}}dt,$
so that \[  
B_{t}^{\uparrow}: = B_{t} -\int_{0}^{t}\frac{ d\langle D,B \rangle_{s}}{D_{s}} = B_{t} - \sigma\int_{0}^{t}Z_{s}^{-\frac{1}{2}} ds
\]
is a Brownian motion under $\mathbb{P}^{\uparrow}$ by Girsanov theorem.  

We next study the way the Poisson random measure $N$ is affected by the change 
of probability, which is the main part of the proof. Enlarging the probability space and filtration if needed, we may and shall  assume that  there is a sequence 
$(u_{n})_{n\geq 1}$ of independent  random variables uniformly distributed on $[0,1]$, independent of $B$ and $N$ and such that $u_{n}\textbf{1}_{t_{n}\leq t}$ is 
$\mathcal{F}_{t}$-measurable.
Define random variables $(\Delta_{n},\delta_{n})\in [0,\infty)^2\times [0,\infty)$  by 
\[
  (\Delta_{n},\delta_{n}):= 
  \left\{  
   \begin{array}{lcl}     
   ((0,0),r_{n}\textbf{1}_{(\nu_{n}\leq Z_{t_{n}^{-}})})  & \mbox{ if } & 
   \displaystyle{u_{n} > \frac{D_{t_{n}-}}{D_{t_{n}}} \,  =\frac{Z_{t_{n}-}}{Z_{t_{n}}} }  \mbox{ and } Z_{t_{n}}>0 ,\\
   ((r_{n},\nu_{n}),0)& \mbox{ if } & \displaystyle{u_{n}\leq \frac{D_{t_{n}-}}{D_{t_{n}}}}  \mbox{ and } Z_{t_{n}}>0,\\
   ((0,0),0)   & \mbox{ if } & Z_{t_{n}}=0.\\ 
   \end{array}   
  \right. 
\]                            
  Let $f_{R,\epsilon}$ be a nonnegative function such that for all $(r,\nu,s)$
\begin{itemize}
\item[-] $f_{R,\epsilon}((r,\nu),s)=0$  when $\nu\geq R$, for some fixed $R \geq 0$,
\item[-] $f_{R,\epsilon}((r,\nu),s)=0$  when $r<\epsilon$, for some fixed $0<\epsilon \leq 1$, and
\item[-] $f_{R,\epsilon}((0,0),0)=0$.
\end{itemize}
  For any non-negative predictable process $F$, we have 
\[
\begin{array}{lcl}
 \sum\limits_{t_{n}\leq t}F_{t_{n}}f_{R,\epsilon}(\Delta_{n},\delta_{n})&=&
\sum\limits_{t_{n}\leq t}F_{t_{n}}f_{R,\epsilon}((0,0),r_{n}\textbf{1}_{\{\nu\leq Z_{t_{n}-}\}})
\textbf{1}_{\{ u_{n}>\frac{Z_{t_{n}-}}{Z_{t_{n}}}\}}\\
\\
&&+\sum\limits_{t_{n}\leq t}F_{t_{n}}f_{R,\epsilon}((r_{n},\nu_{n}),0)
\textbf{1}_{\{ u_{n}\leq\frac{Z_{t_{n}-}}{Z_{t_{n}}}\} }. 
\end{array}
\]
  Therefore, since $1- \frac{Z_{t_{n}-}}{Z_{t_{n}}}= \frac{r_{n}\textbf{1}_{\{ \nu_{n}\leq Z_{t_{n}-}\} }}{Z_{t_{n}}}$, the process
\[
\begin{array}{lcl}
 S_{t}&:=&\sum\limits_{t_{n}\leq t}F_{t_{n}}f_{R,\epsilon}(\Delta_{n},\delta_{n}) -\int_{0}^{t}dsF_{s}\int_{0}^{\infty}\int_{0}^{\infty}f_{R,\epsilon}((0,0),r\textbf{1}_{\{\nu\leq Z_{s}\}})
      \displaystyle{\frac{r\textbf{1}_{(\nu\leq Z_{s})}}{Z_{s}+r\textbf{1}_{(\nu\leq Z_{s})}}}\Pi(dr)d\nu\\
      \\   
      && - \int_{0}^{t}dsF_{s}\int_{0}^{\infty}\int_{0}^{\infty} f_{R,\epsilon}((r,\nu),0)
      \displaystyle{\frac{Z_{s}}{Z_{s}+r\textbf{1}_{(\nu\leq Z_{s})}}}\Pi(dr)d\nu 
\end{array}  
\]
is a martingale under $\mathbb{P}$. 
  The quadratic covariation of $S$ and $D$ is given by 
\[
\begin{array}{lcl}
[S,D]_{t} &=& \sum\limits_{t_{n}\leq t}F_{t_{n}}f_{R,\epsilon}(\Delta_{n},\delta_{n})\frac{e^{\rho t_{n}}}{x} r_{n}\textbf{1}_{(\nu_{n}\leq Z_{t_{n}-})}\\
              \\
              &=&\sum\limits_{t_{n}\leq t}F_{t_{n}}f_{R,\epsilon}((0,0),r_{n}\textbf{1}_{\{\nu\leq Z_{t_{n}-}\}})
                 \frac{e^{\rho t_{n}}}{x} r_{n}\textbf{1}_{\{\nu_{n}\leq Z_{t_{n}-}\}}\textbf{1}_{\left\{ u_{n}>\frac{  Z_{t_{n} - }    }{Z_{t_{n}}}\right\} }\\
               \\
               &&+\sum\limits_{t_{n}\leq t}F_{t_{n}}f_{R,\epsilon}((r_{n},\nu_{n}),0)
                \frac{e^{\rho t_{n}}}{x} r_{n}\textbf{1}_{\{\nu_{n}\leq Z_{t_{n}-}\}}\textbf{1}_{  \left\{ u_{n}\leq \frac{  Z_{t_{n} - }    }{Z_{t_{n}}}\right\} }\,, 

\end{array} 
\]
since $S$ is a jump process.
Thus, the conditional quadratic covariation is 
\[
\begin{array}{lcl}
\langle D,S \rangle_{t}&=&\int_{0}^{t}\frac{e^{\rho s}}{x}F_{s}ds\int_{0}^{\infty}\int_{0}^{\infty}f_{R,\epsilon}((0,0),r\textbf{1}_{\{\nu\leq Z_{s}\}})
                           \displaystyle{\frac{r\textbf{1}_{(\nu\leq Z_{s})}}{Z_{s}+r\textbf{1}_{(\nu\leq Z_{s})}}r}d\Pi(dr)d\nu\\
                        \\
                       && + \int_{0}^{t}\frac{e^{\rho s}}{x}F_{s}ds\int_{0}^{\infty}\int_{0}^{\infty}f_{R,\epsilon}((r,\nu),0)
                          \displaystyle{\frac{Z_{s}}{Z_{s}+r\textbf{1}_{(\nu\leq Z_{s})}}r\textbf{1}_{(\nu\leq Z_{s})}}d\Pi(dr)d\nu.
\end{array} 
\]
  Then, using  Girsanov's theorem, we see that the process
\[
\begin{array}{lcl}
S_{t}^{\uparrow}:&=& S_{t} - 
                     \int_{0}^{t}\int_{0}^{\infty}\int_{0}^{\infty}F_{s}f_{R,\epsilon}((0,0),r\textbf{1}_{(\nu\leq Z_{s})})
                     \displaystyle{\frac{r\textbf{1}_{(\nu\leq Z_{s})}}{Z_{s}+r\textbf{1}_{(\nu\leq Z_{s})}}
                            \frac{r}{Z_{s}}}\Pi(dr)d\nu ds\\
                \\
                && - \int_{0}^{t}\int_{0}^{\infty}\int_{0}^{\infty}F_{s}f_{R,\epsilon}((r,\nu),0)
                     \displaystyle{\frac{Z_{s}}{Z_ {s}+r\textbf{1}_{(\nu\leq Z_{s})}}
                     \frac{r\textbf{1}_{(\nu\leq Z_{s})}}{Z_{s}}}\Pi(dr)d\nu ds
\end{array} 
\]    
is a $(\mathcal{F}_{t})$-martingale under $\mathbb{P}^{\uparrow}$.
  By the definition of $S$,
$$
\begin{array}{lcl}
S_{t}^{\uparrow}&=&\sum\limits_{t_{n}\leq t}F_{t_{n}}f_{R,\epsilon}(\Delta_{n},\delta_{n})  - \int_{0}^{t} F_{s}ds\int_{0}^{\infty}\int_{0}^{\infty}\left[f_{R,\epsilon}((0,0),r\textbf{1}_{(\nu\leq Z_{s})})
                     \displaystyle{\frac{r\textbf{1}_{(\nu\leq Z_{s})}}{Z_{s}}} 
                     + f_{R,\epsilon}((r,\nu),0)\right]\Pi(dr)d\nu \\
                \\
                &=& \sum\limits_{t_{n}\leq t}F_{t_{n}}f_{R,\epsilon}(\Delta_{n},\delta_{n})
                - \int_{0}^{t} F_{s}ds\int_{0}^{\infty}\int_{0}^{\infty}\left[f_{R,\epsilon}((0,0),r)
                \displaystyle{\frac{r}{Z_{s}}\textbf{1}_{(\nu\leq Z_{s})}} 
                + f_{R,\epsilon}((r,\nu),0)\right]\Pi(dr)d\nu \\
                \\
                \end{array} 
$$
since $f_{R,\epsilon}((0,0),0)=0$. 
  Recalling that $S^{\uparrow}$ is a $(\mathcal{F}_{t})$-martingale on $\mathbb{P}^{\uparrow}$ starting from $0$, we deduce that
\[
\begin{array}{lcl}
\mathbb{E}^{\uparrow}\left[\sum\limits_{t_{n}\leq t}F_{t_{n}}f_{R,\epsilon}(\Delta_{n},\delta_{n})\right]&=&
   \mathbb{E}^{\uparrow}\left[\int_{0}^{t} F_{s}ds\int_{0}^{\infty}f_{R,\epsilon}((0,0),r)r\Pi(dr)\right]\\
   \\ 
   && + \mathbb{E}^{\uparrow}\left[\int_{0}^{t} F_{s}ds\int_{0}^{\infty}\int_{0}^{\infty}f_{R,\epsilon}((r,\nu),0)\Pi(dr)d\nu\right].
 \end{array} 
\]
  By standard arguments, this formula is also true for any nonnegative function $f$ such that $f((0,0),0)=0$.         
By  Lemma \ref{characterization} we see that, under $\mathbb{P}^{\uparrow}$, $(t_{n},\Delta_{n})_{n\geq0}$ and 
$(t_{n},\delta_{t})_{n\geq 0}$ are atoms of two Poisson point processes $N^{\uparrow}$ and $N^{\star}$  with intensity measures 
$dt \times d\nu \times \Pi(dr)$ and $dt \times r\Pi(dr)$ on $[0,\infty)\times(0,\infty)\times(0,\infty)$ and 
$[0,\infty)\times(0,\infty)$ respectively. 
By construction,  $N^{\uparrow}$ and $N^{\star}$ are independent because they never jump simultaneously.
  Now set
\[
  J_{t}:=\int_{0}^{t}\int_{0}^{Z_{s^{-}}}\int_{1}^{\infty}rN(ds, d\nu, dr)
        =\sum\limits_{t_{n}\leq t} r_{n} \textbf{1}_{(\nu_{n}\leq Z_{t_{n}^{-}})}\textbf{1}_{(r_{n}\geq1)}. 
 \]
  From above, we have
\[
J_{t}= \sum_{t_{n}\leq t} \Delta_{n}^{(1)}\textbf{1}_{(\Delta^{(2)}_{n}\leq Z_{t_{n}^{-}})}\textbf{1}_{(\Delta_{n}^{(1)}\geq1)} 
       +\sum_{t_{n}\leq t}\delta_{n}\textbf{1}_{(\delta_{n}\geq1)}, 
\]
where $\Delta_{n}^{(i)}$ is the $i-$th coordinate of $\Delta_{n}$, $i=1,2$. 
Therefore 
\begin{equation*}
 J(t)=\int_{0}^{t}\int_{0}^{Z_{s^{-}}}\int_{1}^{\infty}rN^{\uparrow}(ds, d\nu, dr)
      +\int_{0}^{t}\int_{1}^{\infty}rN^{\star}(ds,dr).
\end{equation*}
  Finally, given $0<\varepsilon<1$, let $\{\tilde{M}_{t}^{(\varepsilon)},t\geq0\}$ be the $\mathbb{P}$-martingale 
\[
\begin{array}{lcl}
\tilde{M}_{t}^{(\varepsilon)}&:=& \int_{0}^{t}\int_{0}^{Z_{s^{-}}}\int_{\varepsilon}^{1}rN^{Z}(ds,d\nu,dr)           
                    - \int_{0}^{t}\int_{0}^{Z_{s^{-}}}\int_{\varepsilon}^{1}r\, dsd\nu\Pi(dr)\\
\\
                          &=& \sum\limits_{t_{n}\leq t} r_{n} \textbf{1}_{(\nu_{n}\leq Z_{t_{n}^{-}})}\textbf{1}_{(\varepsilon<r_{n}<1)}
               - \int_{0}^{t}\int_{0}^{Z_{s}}\int_{\varepsilon}^{1}r\,dsd\nu\Pi(dr),
\end{array}
\]
which converges in the $L^{2}(\mathbb{P})$ sense when $\varepsilon\rightarrow 0$ to 
$\tilde{M}_{t}:=\int_{0}^{t}\int_{0}^{Z_{s^{-}}}\int_{0}^{1}r\tilde{N}^{Z}(ds,d\nu,dr).$
  In terms of $(\Delta_{n})$ and $(\delta_{n})$, we can write
\[
\begin{array}{lcl}
\tilde{M}^{(\varepsilon)}&=&\left(\sum\limits_{t_{n}\leq t} \Delta^{(1)}_{n} \textbf{1}_{(\Delta^{(2)}_{n}\leq Z_{t_{n}^{-}})}
                            \textbf{1}_{(\varepsilon<\Delta^{(1)}_{n}<1)} 
                             - \int_{0}^{t}\int_{0}^{Z_{s}}\int_{\varepsilon}^{1}rdsd\nu\Pi(dr)\right)\\
                         \\
                         && + \sum\limits_{t_{n}\leq t} \delta_{n}\textbf{1}_{(\varepsilon<\delta_{n}<1)}\\
                         \\
&=&\left(\int_{0}^{t}\int_{0}^{Z_{s^{-}}}\int_{\varepsilon}^{1}rN^{\uparrow}(ds,d\nu,dr) 
                             - \int_{0}^{t}\int_{0}^{Z_{s}}\int_{\varepsilon}^{1}rdsd\nu\Pi(dr)\right)\\
                         \\
                         && + \int_{0}^{t}\int_{\varepsilon}^{1}rN^{\star}(ds,dr).
\end{array}
\]
  Thanks to \cite[Theorem 2.10]{Ky}, the limit as $\varepsilon\to 0$  in the $L^{2}(\mathbb{P}^{\uparrow})$ sense of the $\mathbb{P}^{\uparrow}$-martingale
given by  the first term on the right hand side exists, and  it is equal to the martingale 
$\int_{0}^{t}\int_{0}^{Z_{s^{-}}}\int_{0}^{1}r\tilde{N}^{\uparrow}(ds,d\nu,dr)$,
where $\tilde{N}^{\uparrow}$ is  the compensated measure associated with $N^{\uparrow}$. 
Also, as $\int_{0}^{\infty}(1 \wedge x^{2})\Pi(dx)< \infty$, by \cite[Theorem 2.9]{Ky} the second term on the right hand side 
converges $\mathbb{P}^{\uparrow}$-a.s.,  so we have
\[
\tilde{M}_{t}=\int_{0}^{t}\int_{0}^{Z_{s^{-}}}\int_{0}^{1}r\tilde{N}^{\uparrow}(ds,d\nu,dr) 
              + \int_{0}^{t}\int_{0}^{1}rN^{\star}(ds,dr).
\]

Bringing  all parts together, we have shown  that  $Z$ satisfies under $\mathbb{P}^{\uparrow}$ the desired SDE, 
except for the independence of the processes $B^{\uparrow}$ and $(N^{\uparrow}, N^{\star}$), which we shall establish in what follows.

  Since $N^{\uparrow}$ and $N^{\star}$ have $\sigma$-finite intensities and thanks to the Markov property of the three processes with 
respect to the filtration $(\mathcal{F}_{t})$, it is enough to show that for every $t>s\geq0$, $\zeta \in \mathbb{R}$, 
$\lambda_{k},\gamma_{k} \in \mathbb{R}_{+}$, $k\in\{1,...,m\}$, and $m\in \mathbb{N}$
\[
\begin{array}{lcl} 
\mathbb{E}^{\uparrow}\left[\displaystyle{e^{-\zeta(B^{\uparrow}_{t}-B^{\uparrow}_{s})}
                             e^{-\sum\limits_{k=1}^{m}\lambda_{k}N^{\uparrow}((s,t]\times W_{k})}
                             e^{-\sum\limits_{k=1}^{m}\gamma_{k}N^{\star}((s,t]\times V_{k})}}\bigg{|}\mathcal{F}_{s}\right]\\
\\
  =  \displaystyle{e^{- \frac{\zeta^{2}}{2}(t-s)}e^{\sum\limits_{k=1}^{m}\int_{s}^{t}\int_{W_{k}}(e^{-\lambda_{k}}-1)\Pi(dr)d\nu du}
                   e^{\sum\limits_{k=1}^{m}\int_{s}^{t}\int_{V_{k}}(e^{-\gamma_{k}}-1)r\Pi(dr)du}},     
\end{array} 
\]
where 
$\{W_{k}\}_{k=1}^{m}$ and  $\{V_{k}\}_{k=1}^{m}$ are disjoint sets of $(0,\infty)\times (0,\infty)$ and $(0,\infty)$ such that 
$\int_{0}^{t}\int_{W_{k}}\Pi(dr)d\nu du$ and  $\int_{0}^{t}\int_{V_{K}}r\Pi(dr)du$ are finite.
To that end, set 
$$F(x,y_{1},..,y_{m},z_{1},..,z_{m}):= e^{-\zeta x}e^{-\sum_{k=1}^{m}\lambda_{k}y_{k}}e^{-\sum_{k=1}^{m}\gamma_{k}z_{k}}.$$
Applying It\^{o}'s formula to the semimartingale
\[
X(t)=\left(B^{\uparrow}(t),N^{\uparrow}((0,t]\times W_{1}),..,
      N^{\uparrow}((0,t]\times W_{m}),N^{\star}((0,t]\times V_{1}),..,N^{\star}((0,t]\times V_{m})\right), 
\]
we obtain:
\[
\begin{array}{lcl}
  F(X(t))&=&F(X(s))-\int_{s}^{t}\zeta F(X(u))dB^{\uparrow}_{u} -\sum\limits_{j=1}^{m}\int_{s}^{t}\int_{W_{j}}\lambda_{j}F(X(u))N^{\uparrow}(du, d\nu, dr)\\
         \\
         && -\sum\limits_{j=1}^{m}\int_{s}^{t}\int_{V_{j}}\gamma_{j}F(X(u))N^{\star}(du, dr)+\frac{\zeta^{2}}{2}\int_{s}^{t}F(X(u))du
           + \sum\limits_{s<u\leq t} F(X(u))-F(X(u^{-})) \\
          \\
         &&+\sum\limits_{s<t_{n}\leq t}\sum\limits_{j=1}^{m}\left[\lambda_{j}F(X(t_{n}))\textbf{1}_{\{\Delta_{n}\in W_{j}\}}
           +\gamma_{j}F(X(t_{n}))\textbf{1}_{\{\delta_{n}\in V_{j}\}}\right].
\end{array}
\]
  From above, we deduce that
\[
\begin{array}{lcl}
F(X(t))-F(X(s))&=&\bar{M}_{t}-\bar{M}_{s}+\frac{\zeta^{2}}{2}\int_{s}^{t}F(X(u))du \\
               \\
               && + \sum\limits_{s<t_{n}\leq t}\left[ F\left(X(t_{n}-)+(0,\textbf{1}_{\{\Delta_{n}\in W_{1}\}},..,\textbf{1}_{\{\delta_{n}\in V_{m}\}})\right)-F(X(t_{n}-))\right],              
\end{array}
\]
where $(\bar{M}_{t})$ is a $(\mathcal{F}_{t})$-martingale.
Defining
$
f(\Delta_{n},\delta_{n}):=e^{-\sum\limits_{k=1}^{m}\lambda_{k}\textbf{1}_{\{\Delta_{n}\in W_{k}\}}
                          -\sum\limits_{k=1}^{m}\lambda_{k}\textbf{1}_{\{\delta_{n}\in V_{k}\}}} - 1, $
we have
\begin{equation*}
 F(X(t))-F(X(s))=\bar{M}_{t}-\bar{M}_{s} +\frac{\zeta^{2}}{2}\int_{s}^{t}F(X(u))du
                + \sum\limits_{s<t_{n}\leq t}F(X(t_{n}-))[f(\Delta_{n},\delta_{n})]. 
\end{equation*}
Let now $A \in \mathcal{F}_{s}$. Multiplying both sides by $F(-(X(s)))\textbf{1}_{A}$, yields:
\[
\begin{array}{lcl}
\mathbb{E}^{\uparrow}[F(X(t-s))\textbf{1}_{A}]-\mathbb{P}^{\uparrow}(A)&=&\frac{\zeta^{2}}{2}\int_{s}^{t}\mathbb{E}^{\uparrow}
                                                                \left[F(X(u-s))\textbf{1}_{A}\right]du \\
                     \\
                     && + \int_{s}^{t}\mathbb{E}^{\uparrow}\left[F(X(u-s))\textbf{1}_{A}\right]du
                           \sum\limits_{k=1}^{m}\int_{W_{k}}(e^{-\lambda_{k}}-1)\Pi(dr)d\nu \\
                     \\
                    && + \int_{s}^{t}\mathbb{E}^{\uparrow}\left[F(X(u-s))\textbf{1}_{A}\right]du
                          \sum\limits_{k=1}^{m}\int_{V_{k}}(e^{-\gamma_{k}}-1)r\Pi(dr).
       
    \end{array} 
\]
  Thus, 
\[
\mathbb{E}^{\uparrow}\left[F(X(t-s))\textbf{1}_{A}\right] = \mathbb{P}^{\uparrow}(A) e^{-\frac{\zeta^{2}}{2}(s-t)}
                                                            e^{\sum\limits_{k=1}^{m}\int_{s}^{t}\int_{W_{k}}(e^{-\lambda_{k}}-1)\Pi(dr)d\nu du}
                                                            e^{\sum\limits_{k=1}^{m}\int_{s}^{t}\int_{V_{k}}(e^{-\gamma_{k}}-1)r\Pi(dr)du}
\] 
which means that the three processes are mutually independent, which ends the proof of 
weak existence.

As concerns pathwise uniqueness, we just remark that the proof of Theorem 3.2 in 
 \cite{FL} covers the case of equation \eqref{SDECSBPCond}.   Indeed, if $B^{\uparrow}$, $N^\uparrow$ and $N^{\star}$ are independent processes as before  driving two solutions 
 $\{Z_{t}^{(1)}\}$ and $\{Z_{t}^{(2)}\}$ of \eqref{SDECSBPCond}, setting $\zeta_{t}: = Z_{t}^{(1)} - Z_{t}^{(2)}$
 one gets that 
\begin{equation}\label{eq:z1-z2}
 \begin{array}{lcl}
  \zeta_{t}&=&\zeta_{0} + \int_{0}^{t} a \left(Z_{s}^{(1)} - Z_{s}^{(2)}\right)ds + \int_{0}^{t}\sigma\left(\sqrt{Z_{s}^{(1)}}
               - \sqrt{Z_{s}^{(2)}}\right)dB^{\uparrow}_{s}\\
           \\
          && + \int_{0}^{t}\int_{U_{0}} r \left(\textbf{1}_{(\nu<Z_{s}^{(1)})} - \textbf{1}_{(\nu<Z_{s}^{(2)})}\right)N^{\uparrow}(ds,d\nu,dr)\\
           \\
          && + \int_{0}^{t}\int_{U_{1}} r \left(\textbf{1}_{(\nu<Z_{s}^{(1)})} - \textbf{1}_{(\nu<Z_{s}^{(2)})}\right)\tilde{N}^{\uparrow}(ds,d\nu,dr),       
 \end{array}
\end{equation}
where $U_{0}=[0,\infty)\times[1,\infty)$ and $U_{1}=[0,\infty)\times(0,1)$. From this point on, the proof of  Theorem 3.2 in 
 \cite{FL}  applies, since conditions (2.a,b) and (3.a,b) therein are satisfied. Indeed, in  their notations, we have the intensity measure $\mu(du)=\Pi(dr)d\nu$ for $N^{0}=N^{\uparrow}|_{U_{0}}$ and $N^{1}=N^{\uparrow}|_{U_{1}}$ (where $u=(r,\nu)$), 
 continuous functions on $\mathbb{R}$  given by $b(x):=ax\textbf{1}_{0\leq x} $  and $\sigma(x):= \sigma\sqrt{x}\textbf{1}_{0\leq x}  $, 
and  Borel functions on $\mathbb{R}\times U_{i}$, 
$i=\{0,1\}$ given by $g(x,u)=g_{0}(x,u)=g_{1}(x,u)= r\textbf{1}_{\nu<x}$ such that $g(x,u)+x\geq0$ for $x>0$
 and $g(x,u)=0$ for $x\leq0$. Moreover,  \begin{enumerate}
 \item
  there is a constant $K:= |a| + M \geq 0$ , where $\int_{1}^{\infty} r\Pi(dr)= M<\infty$,
   such that 
  \[
    |ax| + \int_{0}^{\infty}\int_{1}^{\infty} r\textbf{1}_{\nu<x}\Pi(dr)d\nu \leq K(x+1) \,; 
  \]
\item
  there is a non-negative and non-decreasing function $L(x)=(\sigma^{2} + I)(x)$ on  $\mathbb{R}_{+}$, with  $I= \int_{0}^{1} r^{2}\Pi(dr)$, so that
\[
 \sigma^{2}x + \int_{0}^{\infty}\int_{0}^{1} r^{2}\textbf{1}_{\nu<x}\Pi(dr)d\nu \leq L(x) ;
\]
\item 
 there is a continuous non-decreasing function $x\rightarrow b_{2}(x):= x$ on  $\mathbb{R}_{+}$  such that for $b_1(x)= b(x) + b_{2}(x)$, on has  
$$|(a+1)(b_1(x) -b_1(y))| + \int_{0}^{\infty}\int_{1}^{\infty}r\textbf{1}_{y<\nu<x} \Pi(dr)d\nu \leq r(|x - y|) \, ; $$
where $r$ is  the non-decreasing and concave function $r(z)=:(|a+1|+ M)z$ on $\mathbb{R}_{+}$  satisfying $\int_{0_{+}} r(z)^{-1}dz = \infty $; and
\item 
 for every fixed $u \in U_{0}$ the function $x \rightarrow g(x, u)$ is non-decreasing, and  
there is a non-negative and non-decreasing function $\rho(z):=[\sigma^{2} + I]\sqrt{z}$ on $\mathbb{R}_{+}$
 so that $\int_{0_{+}}\rho(z)^{-2}dz= \infty $
and
$$(\sigma \sqrt{x} - \sigma\sqrt{y})^{2} + \int_{0}^{\infty}\int_{0}^{1} r^{2}\textbf{1}_{y<\nu<x} \Pi(dr)d\nu
\leq \rho(|x − y|)^{2}.$$
\end{enumerate}
Conditions 1,2,3 and 4 respectively ensure that hypotheses (2.a,b) and (3.a,b)  in  \cite{FL} hold, and pathwise uniqueness follows.
 \end{proof}

{\bf Acknowledgements}  We would like to thank Julien Berestycki for pointing out  to us relevant references and for several  remarks  that  helped us  to improve earlier versions of this work.

\end{document}